\documentclass[a4paper,reqno,11pt]{amsart}

\usepackage{setspace, amssymb, amsmath, amsthm, graphicx, color, comment, booktabs, multirow, hyperref, mathtools, thmtools, thm-restate}
\usepackage[T1]{fontenc}
\usepackage[utf8]{inputenc}
\usepackage[inline]{enumitem}
\usepackage[dvipsnames,svgnames,table]{xcolor}
\usepackage[foot]{amsaddr}
\usepackage[capitalise, noabbrev]{cleveref}

\usepackage[longnamesfirst,numbers,sort&compress]{natbib}

\hypersetup{
    pdftitle={Strong coloring numbers of grids},
    pdfauthor={Piotr Micek, Torsten Ueckerdt},
    colorlinks,
    linkcolor={RoyalBlue},
    citecolor={RubineRed},
    urlcolor={blue!80!black},
    nesting=false,
}

\input{style.toc}

\colorlet{defcolor}{ForestGreen}
\newcommand{\defin}[1]{\relax\ifmmode{\color{defcolor}{#1}}\else{\emph{\textcolor{defcolor}{#1}}}\fi}

\DeclarePairedDelimiter\set{\{}{\}}

\newcommand{\calB}{\mathcal{B}}

\newcommand{\dist}{\mathrm{dist}}

\let\leq\leqslant
\let\geq\geqslant

\let\epsilon\varepsilon
\let\phi\varphi

\DeclareMathOperator\SReach{SReach}

\DeclareMathOperator\scol{scol}

\usepackage{tikz}
\usetikzlibrary{arrows}
\usetikzlibrary{calc}
\tikzset{myspacing/.style = {outer sep = 5pt, inner sep = 5pt}}
\tikzset{implies/.style = {-implies, double distance=3pt, thick}}
\tikzset{box/.style = {rectangle, draw, myspacing, align=center}}

\providecommand{\noopsort}[1]{}

\makeatletter
\def\thm@space@setup{
\thm@preskip=4mm
\thm@postskip=0mm
}
\makeatother

\crefformat{equation}{#2(#1)#3}
\let\eqref\cref
\crefformat{subsection}{Subsection #2#1#3}
\crefformat{subsubsection}{Subsubsection #2#1#3}

\theoremstyle{plain}
\newtheorem{thm}{Theorem}
\newtheorem*{thm*}{Theorem}

\newtheorem{lemma}[thm]{Lemma}
\newtheorem*{lemma*}{Lemma}

\newtheorem*{cor*}{Corollary}

\newtheorem*{conj*}{Conjecture}

\theoremstyle{remark}

\newtheorem*{problem*}{Open problem}

\newtheorem*{claim*}{Claim}

\crefname{obs}{Observation}{Observations}
\theoremstyle{definition}

\crefname{lem}{Lemma}{Lemmas}
\crefname{thm}{Theorem}{Theorems}
\crefname{cor}{Corollary}{Corollaries}

\setenumerate{label=\textup{(\roman*)}, noitemsep, topsep=3pt-\parskip, 
labelindent=.2em, leftmargin=*, widest=iii,}
\setitemize{noitemsep, topsep=-\parskip, labelindent=.2em, leftmargin=*, widest=iii,}

\newdimen\mywidth
\sbox0{m}%
\newdimen\mywidthprim
\sbox0{m}%
\newdimen\mywidthprimprim
\sbox0{m}%
\newdimen\mywidthA
\sbox0{m}%
\newdimen\mywidthAprim
\sbox0{m}%
\title{Strong coloring numbers of grids}

\begin{document}

\author[Micek]{Piotr Micek}
\address[P.~Micek]{Theoretical Computer Science Department,
Faculty of Mathematics and Computer Science, Jagiellonian University, Krak\'ow, Poland}
\email{piotr.micek@uj.edu.pl}

\author[Ueckerdt]{Torsten Ueckerdt}
\address[T.~Ueckerdt]{Institute of Theoretical Informatics,
Karlsruhe Institute of Technology, Karlsruhe, Germany}
\email{torsten.ueckerdt@kit.edu}

\thanks{
    Piotr Micek was funded by the National Science Centre, Poland under grant UMO-2023/05/Y/ST6/00079 within the WEAVE-UNISONO program.
    Torsten Ueckerdt was funded by the Deutsche Forschungsgemeinschaft (DFG, German Research Foundation) – 532213793.
}

\begin{abstract}
For positive integers $n$ and $d$, 
the $d$-dimensional grid of side length $n$ is the graph $G(n,d)$ with vertex set $[n]^d$, in which two vertices are adjacent if they differ by $1$ in exactly one coordinate and agree in all the other coordinates.
We determine, for every fixed $d$, the asymptotic growth in $r$ of the $r$th strong coloring numbers of these grids. 
More precisely for $n$ large enough, $\scol_r(G(n,d)) = \Omega_d\bigl(r^{d-1}\bigr)$ as $r\to\infty$.
This matches an upper bound given by a volume packing argument, which holds more generally for touching graphs of unit balls in $\mathbb R^d$.
For the lower bound, we construct in $G(n,d)$ a bramble of order $\Omega_d(n^{d-1})$ with each element of radius at most $dn$. 
This construction was found by OpenAI's GPT-5.6 Sol.
\end{abstract}

\maketitle

\section{Introduction}\label{sec:intro}

All graphs considered in this paper are finite, simple, and undirected.
Let $G$ be a graph, let $\sigma$ be a linear ordering of $V(G)$, and let $r$ be a nonnegative integer.
For vertices $u,v\in V(G)$, we say that $u$ is \defin{strongly $r$-reachable from $v$ in $(G,\sigma)$} if $u\leq_\sigma v$ and there is a path from $v$ to $u$ of length at most $r$ such that every internal vertex of the path is greater than $v$ in~$\sigma$.
Let \defin{$\SReach_r[G,\sigma,v]$} be the set of all vertices strongly $r$-reachable from $v$ in $(G,\sigma)$.
The \defin{$r$th strong coloring number} of $G$ is
\[
    \defin{\text{$\scol_r(G)$}}
    =\min_{\sigma}\max_{v\in V(G)}\bigl|\SReach_r[G,\sigma,v]\bigr|,
\]
where the minimum is taken over all linear orderings of $V(G)$.

Strong coloring numbers and their weak counterparts were introduced by Kierstead and Yang~\cite{KY03}. They provide a quantitative description of graph sparsity and, in particular, characterize classes of bounded expansion~\cite{Zhu09}: a graph class $\mathcal{C}$ has \defin{bounded expansion} if and only if there exists a function $f\colon\mathbb{N}\to\mathbb{N}$ such that
$\scol_r(G)\leq f(r)$ for every $G\in\mathcal{C}$ and every $r\in\mathbb N$. Examples include classes of bounded treewidth and all proper minor-closed classes, in particular the class of planar graphs. We refer to the book of Ne\v{s}et\v{r}il and Ossona de Mendez~\cite{sparsity} and the lecture notes of Pilipczuk, Pilipczuk, and Siebertz~\cite{notes} for further background.

For a positive integer $d$, let $\mathcal{C}_d$ denote the class of touching graphs of unit balls in $\mathbb{R}^d$; that is, intersection graphs of finite families of closed unit balls with pairwise disjoint interiors. 
Dvo\v{r}\'ak, Pek\'arek, Ueckerdt, and Yuditsky~\cite{DPUY22} asked for the asymptotic growth, as $r\to\infty$ with fixed $d$, of
$s_d(r):=\sup_{G\in\mathcal C_d}\scol_r(G)$. 
For every fixed $d\geq2$, the previously known bounds were
$\Omega_d\bigl(r^{d/2}\bigr)
\leq s_d(r)
\leq \mathcal O_d\bigl(r^{d-1}\bigr)$.
The upper bound follows by a standard volume packing argument; 
we include a short self-contained proof in \cref{sec:upper-bound}.

For positive integers $n$ and $d$, let \defin{$G(n,d)$} be the graph with vertex set $[n]^d$, in which distinct vertices $x=(x_1,\ldots,x_d)$ and $y=(y_1,\ldots,y_d)$ are adjacent if they differ by $1$ in exactly one coordinate and agree in all the other coordinates.
Thus, $G(n,d)$ is the Cartesian product  of $d$ copies of the $n$-vertex path, also known as the \emph{$d$-dimensional grid} of side length $n$.

Our main result is that for all positive integers $d$, $r$ and for large enough integer $n$, we have $\scol_r(G(n,d))=\Omega_d(r^{d-1})$. Since $d$-dimensional grids are in $\mathcal{C}_d$ we obtain $s_d(r)=\Theta_d(r^{d-1})$, so the upper bound given by the volume packing argument is asymptotically tight.

\begin{thm}
\label{thm:main}
Let $d$ and $r$ be positive integers, and let $q=\lfloor\frac{r}{4d}\rfloor+1$. 
Then for every $n\geq q$
\[
\scol_r(G(n,d)) \geq \scol_r(G(q,d)) \geq \left\lceil\frac{q^{d-1}}{2d}\right\rceil.
\]
Therefore, $\scol_r(G(n,d))=\Omega_d(r^{d-1})$.
\end{thm}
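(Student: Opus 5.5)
The plan has three steps: monotonicity, a bramble lemma, and the bramble construction.

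\emph{Monotonicity.} The first inequality follows because strong coloring numbers are monotone under taking subgraphs. $G(q,d)$ is an induced subgraph of $G(n,d)$ for $n\geq q$. Given an ordering $\sigma$ of $V(G(n,d))$, restrict it to $[q]^d$. Any path witnessing strong $r$-reachability in the subgrid is also a witness in the big grid, since it uses only vertices of the subgrid and the same order. Hence every strongly reachable set in the subgrid is contained in the corresponding set in $G(n,d)$.

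\emph{Reduction to brambles.} For the second inequality I use the standard bramble argument. Call a family $\calB$ of connected subgraphs a \emph{bramble} if any two elements touch, i.e.\ share a vertex or are joined by an edge. Its \emph{order} is the minimum size of a hitting set. Suppose every element of $\calB$ has radius at most $\rho$, and $r\geq 4\rho$ (or whatever constant the details require). I claim $\scol_r(G)\geq$ the order of $\calB$.

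To see this, fix $\sigma$. For each $B\in\calB$ let $m(B)$ be its $\sigma$-minimal vertex, and let $v$ be the $\sigma$-maximum among all $m(B)$, attained at $B_0$. Then every vertex of $B_0$ is $\geq_\sigma v$. For every $B\in\calB$, $B$ touches $B_0$, so take a shortest path inside $B\cup B_0$ from $v$ to the first vertex $\leq_\sigma v$ encountered while walking from $v$ through $B_0$ to the touching point and on to $m(B)$ within $B$. Its length is at most about $2\rho+1+2\rho\leq r$, and all internal vertices are $>_\sigma v$. So each $B$ contains a vertex of $\SReach_r[G,\sigma,v]$ (or $v$ itself, when $B\ni v$). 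Thus $\SReach_r[G,\sigma,v]$ is a hitting set, and its size is at least the order.

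Since $q-1\leq r/(4d)$, radius $dq$-ish elements satisfy this bound with $4dq\approx r$. I would need to tune this to match $q=\lfloor r/(4d)\rfloor+1$, possibly using radius $\leq d(q-1)$ or diameter-based bounds.

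\emph{The construction (main obstacle).} It remains to build a bramble in $G(q,d)$ of order at least $\lceil q^{d-1}/(2d)\rceil$ with elements of radius at most $d(q-1)$. My first try is cross-shaped elements. For each $x\in[q]^{d-1}$ and each direction $i$, let $L_i(x)$ be the axis-parallel line in direction $i$ through the point determined by $x$. The elements would be unions of $d$ lines, one per direction, forming a connected ``staircase''.

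Concretely, take for a point $p\in[q]^d$ the set $S(p)=\bigcup_i L_i(p)$ of all $d$ axis lines through $p$. This is a connected cross of radius $\leq d(q-1)$ (indeed $q-1$). However, two crosses $S(p)$ and $S(p')$ touch only if $p,p'$ agree in some $d-1$ coordinates. That is false in general, so pure crosses fail for $d\geq3$.

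Instead I would use elements that are unions of full hyperplanes of one coordinate direction connected through a path. For example, take $B=\{x: x_1=a\}\cup\{x:x_2=b\}$ or, more generally, a ``staircase'' path that visits each of the $d$ coordinate directions. Intersection would then be guaranteed by dimension counting, via a Helly/pigeonhole argument on a monotone path that hits every hyperplane $x_i=c$.

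The order bound would come from counting. A set $X$ of fewer than $q^{d-1}/(2d)$ vertices misses some element, because each vertex lies in at most a $2d/q^{d-1}$ fraction of elements under a uniform distribution (a fractional-packing argument). Making simultaneously ``pairwise touching'' and ``each vertex in few elements'' hold is the crux. I expect monotone lattice paths between opposite faces, which pairwise touch by a topological, Hex-like argument, to be the right family. The fraction bound would then be checked via a probability distribution on such paths that spreads the mass.
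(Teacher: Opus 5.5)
\textbf{Genuine gap: the bramble construction.} Your monotonicity step is correct, and so is the reduction to brambles, which is \cref{lem:bramble_to_scol}. One fix is needed there. With \emph{touching} brambles your walk can have length $4\rho+1$. When $4d\mid r$ and $\rho=d(q-1)$ this equals $r+1>r$. Use \emph{intersecting} brambles instead, where $4\rho\le 4d(q-1)\le r$ suffices.

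The real problem is the construction of the bramble, which is the entire content of the theorem and which you leave open. Worse, none of your candidate families can work for $d\geq 3$.
\begin{itemize}
\item \emph{Paths and crosses.} In any touching bramble $\calB$ with $B_0\in\calB$, the closed neighbourhood $N[B_0]$ meets every member. So the order is at most $(2d+1)|B_0|$. Crosses, staircase paths and monotone lattice paths all have at most $d(q-1)+1$ vertices. Any bramble built from them therefore has order $O(d^2q)$, far below $q^{d-1}/(2d)$ once $d\geq 3$.
\item \emph{Hex-type touching.} Two monotone paths in dimension $\geq 3$ need not touch at all. The Hex-type duality pairs crossing paths with separating hypersurfaces, not paths with paths.
\item \emph{Unions of coordinate hyperplanes.} These fail too: $d$ axis-parallel lines, one per direction, meet every hyperplane $\{x_i=c\}$. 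So the order is at most $dq$.
\end{itemize}
The members must have $\Omega(q^{d-1}/d^2)$ vertices, so your fractional-packing heuristic has no family to be applied to.

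\textbf{The missing idea.} Let each potential hitting set define its own member. Let $P(x,y)$ be the canonical path that changes coordinates $1,\dots,d$ in turn. Every vertex lies on at most $dq^{d+1}$ of the $q^{2d}$ canonical paths. Your fraction $d/q^{d-1}$ is exactly this congestion, but it belongs to paths, not to bramble members.
\begin{itemize}
\item \emph{Choosing the member.} Given $S$ with $|S|<q^{d-1}/(2d)$, fewer than $q^{2d}/2$ pairs $(x,y)$ have $P(x,y)$ meeting $S$. Averaging over $y$ gives an anchor $y_S$ such that $B_S=\{x : V(P(x,y_S))\cap S=\emptyset\}$ has more than $q^d/2$ vertices.
\item \emph{Connectivity and radius.} A suffix of a canonical path to $y_S$ is again the canonical path to $y_S$. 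Hence $G[B_S]$ is connected, with radius at most $d(q-1)$ measured from $y_S$.
\item \emph{Intersection.} Any two such sets each contain more than half the vertices, so they intersect.
\item \emph{Order.} Since $B_S\cap S=\emptyset$, no set of size below $q^{d-1}/(2d)$ is a transversal.
\end{itemize}
Without this construction, or an equivalent one, your proposal does not establish the second inequality.
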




Within the proof, 
we construct a bramble in $G(n,d)$ of large order such that each element of the bramble has bounded radius. Recall that a bramble in a graph  $G$ is a family of connected vertex sets in $G$ any two of which touch, and its order is the minimum size of a vertex set meeting every member. Our bramble members actually pairwise intersect. We prove that if such a bramble has order at least $t$ and every member has radius at most $\rho$, then
$\scol_{4\rho}(G)\geq t$.
This is an (obvious) specialization of the relationship between shallow brambles and strong coloring numbers studied by Bousquet, Cames van Batenburg, Esperet, Joret, and Micek~\cite[Theorem 6]{BousquetEtAl25}.

The classical treewidth duality theorem by Seymour and Thomas~\cite{Seymour93} says that the maximum order of a bramble in $G$ is equal to the treewidth of $G$ plus one. 
Interestingly, the treewidth of $G(n,d)$ is known to be in $\Theta_d(n^{d-1})$. This follows from the bounds on carving width by Kozawa, Otachi, and Yamazaki~\cite{Kozawa2010}, see also a discussion by Marx and Sidiropoulos~\cite[Proposition 2.3]{Marx2014}.
Kozawa, Otachi and Yamazaki obtained the lower bound applying the edge-isoperimetric inequality derived by Bollobás and Leader~\cite{BL91}. 
As far as we know, we present the first explicit construction of a bramble of asymptotically maximum order in $d$-dimensional grids, for all $d\geq2$.

\section{Proof}\label{sec:proof}

For a positive integer $d$, we denote by \defin{$[d]$} the set $\{1,\dots,d\}$, and $\defin{[0]} = \emptyset$.

Let $G$ be a graph. 
Let $X$ be a subset of vertices of $G$. By $\defin{G[X]}$ we denote the subgraph of $G$ induced by $X$. 
We say that $X$ is \defin{connected} if $G[X]$ is a connected graph.
The \defin{distance} between two vertices $u$ and $v$ in $G$, denoted by $\defin{\dist_G(u,v)}$, is the minimum length (number of edges) of a $u$--$v$ path in $G$. 
The \defin{radius} of $G$ is the minimum integer $r$ such that there exists a vertex $v\in V(G)$ with  
$\dist_G(v,u)\leq r$ for all $u\in V(G)$.

An \defin{intersecting bramble} in $G$ is a family $\calB$ of nonempty connected subsets of $V(G)$ such that any two members of $\calB$ intersect.
The \defin{order} of $\calB$ is the minimum size of a set $X\subseteq V(G)$ that intersects every member of $\calB$.
We say that $\calB$ has \defin{radius} at most $\rho$ if the graph $G[B]$ has radius at most $\rho$ for every $B\in\calB$.

The next lemma connects brambles of bounded radius with a lower bound for a strong coloring number.
The proof is almost one-to-one in~\cite{BousquetEtAl25}. 
We include it for completeness.

\begin{lemma}[\cite{BousquetEtAl25}]\label{lem:bramble_to_scol}
    Let $G$ be a graph, and let $\calB$ be an intersecting bramble in $G$ of radius at most $\rho$ and order at least $t$.
    Then
    \[
        \scol_{4\rho}(G)\geq t.
    \]
\end{lemma}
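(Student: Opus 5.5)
Fix an arbitrary linear ordering $\sigma$ of $V(G)$. We will exhibit a vertex $v$ with $|\SReach_{4\rho}[G,\sigma,v]|\geq t$. The natural candidate comes from the bramble. For each $B\in\calB$, let $m(B)$ be the $\sigma$-minimum vertex of $B$. Among all members of $\calB$, choose $B_0$ for which $m(B_0)$ is $\sigma$-maximum, and set $v=m(B_0)$.

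The key observation is that every $B\in\calB$ satisfies $m(B)\leq_\sigma v$. Hence every vertex of $B$ that is smaller than $v$ is a vertex of $B$ outside the set $\{u: u>_\sigma v\}$. Let $X=\SReach_{4\rho}[G,\sigma,v]$; note $v\in X$ via the trivial path. The main claim is that $X$ meets every member of $\calB$. Once this holds, the definition of order gives $|X|\geq t$. Since $\sigma$ was arbitrary, $\scol_{4\rho}(G)\geq t$ follows.

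To prove the claim, take $B\in\calB$. By the intersecting property there is $w\in B\cap B_0$. Let $c_0$ and $c$ be centers of $G[B_0]$ and $G[B]$, respectively.
\begin{itemize}
\item In $G[B_0]$, there is a path from $v$ to $w$ of length at most $2\rho$, going through $c_0$.
\item In $G[B]$, there is a path from $w$ to $m(B)$ of length at most $2\rho$, going through $c$.
\end{itemize}
Concatenating gives a walk $W$ from $v$ to $m(B)$ of length at most $4\rho$. All vertices of $W$ except possibly its endpoints lie in $B_0\cup B$.

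Now walk along $W$ from $v$, skipping the starting vertex, and stop at the first vertex $u$ with $u\leq_\sigma v$. Such a $u$ exists because the endpoint satisfies $m(B)\leq_\sigma v$. The portion of $W$ from $v$ to $u$ has all internal vertices $>_\sigma v$ and has length at most $4\rho$; shortcutting repeated vertices turns it into a path with the same property. Hence $u\in X$.

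It remains to ensure that $u\in B$. Since every vertex of $B_0$ is $\geq_\sigma v$, the first half of $W$ gives no problem, except that it may return to $v$ itself. Two cases arise.
\begin{itemize}
\item If the stopping vertex is $v$ itself, we discard that prefix and restart the scan from that occurrence.
\item Otherwise the stopping vertex $u$ lies strictly below $v$. Then $u$ lies in neither $B_0$ nor the first half of $W$, so it lies on the second half, which is contained in $B$. Thus $u\in B\cap X$.
\end{itemize}
If instead the only vertices $\leq_\sigma v$ on $W$ are occurrences of $v$ itself, then $m(B)=v$. In that case $v\in B\cap X$ and we are done.

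The main obstacle is this bookkeeping: ensuring that the first vertex $\leq_\sigma v$ after leaving $v$ lies in $B$ rather than merely in $B_0$. It is resolved by the choice of $v$ as the maximal bramble-minimum, which forces all of $B_0$ to lie weakly above $v$.
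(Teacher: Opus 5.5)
Your proof is correct and follows the paper's argument. You make the same choice of $v$ as the $\sigma$-largest of the minima $m(B)$, concatenate two paths of length at most $2\rho$ through a common vertex of $B_0\cap B$, and run the same first-vertex-below-$v$ scan. The paper avoids your restart bookkeeping by first handling the case $v\in B$ and by using a genuine path rather than a walk in $G[B_0]$, so no vertex after $v$ can equal $v$.
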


\begin{proof}
    Fix a linear ordering $\sigma$ of $V(G)$.
    For each $B\in\calB$, let $m(B)$ be the minimum vertex of $B$ in $\sigma$.
    Choose $B_0\in\calB$ so that $v=m(B_0)$ is maximum in $\sigma$ among the vertices in $\{m(B)\mid B\in\calB\}$.
    We claim that $\SReach_{4\rho}[G,\sigma,v]$ intersects every member of~$\calB$. 

    Fix $B\in\calB$. We aim to prove that 
    $B\cap \SReach_{4\rho}[G,\sigma,v] \neq\emptyset$. 
    If $v\in B$, then there is nothing to prove as $v\in\SReach_{4\rho}[G,\sigma,v]$.
    Suppose that $v\notin B$. 
    Since $\calB$ is an intersecting bramble, In this case, $B_0\neq B$ and $m(B)<_\sigma m(B_0)=v$. 
    Since $\calB$ is an intersecting bramble $B_0\cap B\neq\emptyset$ and we fix a vertex $x\in B_0\cap B$.
    Since $G[B_0]$ and $G[B]$ both have radius at most $\rho$, there is a path $P_0$ in $G[B_0]$ from $v$ to $x$ of length at most $2\rho$, and there is a path $P$ in $G[B]$ from $x$ to $m(B)$ of length at most $2\rho$.
    Every vertex of $P_0 - \{v\}$ is greater than $v$ in $\sigma$.
    In particular $v <_{\sigma} x$. 
    Traverse the second path $P$ from $x$ towards $m(B)$, and let $u$ be the first vertex on $P$ smaller than $v$ in $\sigma$. 
    It exists as $m(B)<_\sigma v$. 
    Every vertex encountered before $u$ is greater than $v$.
    Concatenating $P_0$ and the subpath of $P$ from $x$ to $u$ gives a subgraph of $G$ that contains a path from $v$ to $u$ of length at most $4\rho$ whose internal vertices are greater than $v$.
    Hence, $u\in\SReach_{4\rho}[G,\sigma,v]\cap B$. This completes the proof of the claim.

    Thus, $\SReach_{4\rho}[G,\sigma,v]$ intersects every element of $\calB$, and consequently it has size at least $t$.
    Since $\sigma$ was arbitrary, $\scol_{4\rho}(G)\geq t$.
\end{proof}

We now construct a suitable intersecting bramble in a $d$-dimensional grid.

\begin{lemma}\label{lem:grid_bramble}
    Let $q$ and $d$ be positive integers.
    The grid $G(q,d)$ contains an intersecting bramble of radius at most $d(q-1)$ and order at least
    \[
        \left\lceil\frac{q^{d-1}}{2d}\right\rceil.
    \]
\end{lemma}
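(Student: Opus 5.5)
The plan is to exhibit an explicit intersecting bramble $\calB$ in $G(q,d)$ and bound its order by a double-counting argument. The bramble will be indexed by a set $A$ of size about $q^{d-1}$. Every member will be a union of at most $d$ axis-parallel segments, and every vertex of the grid will lie in at most $2d$ members. Given these properties, a set $X$ meeting every member satisfies $|A|\leq\sum_{x\in X}\#\{B\in\calB : x\in B\}\leq 2d|X|$. Hence $|X|\geq|A|/(2d)\geq q^{d-1}/(2d)$, which gives the order bound after rounding up. The radius bound should also come for free from the shape of the members. If each $B\in\calB$ is a \emph{staircase}, meaning a concatenation of $d$ axis-parallel segments traversing the $d$ directions once each in some order, then $B$ is a path of length at most $\sum_{i}(q-1)=d(q-1)$. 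In particular $G[B]$ has radius at most $d(q-1)$.

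The model case is $d=2$. There one takes, for $a\in[q]$, the cross $B_a=\{(a,y)\}\cup\{(x,a)\}$, consisting of row $a$ together with column $a$. This set is connected, with the two lines meeting at $(a,a)$. Any two crosses intersect, since row $a$ meets column $b$ at $(a,b)$. Each vertex $(x,y)$ lies only in $B_x$ and $B_y$, so multiplicity at most $2$ suffices, giving order at least $q/2$.

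For general $d$ I would index members by $a=(a_1,\dots,a_{d-1})\in[q]^{d-1}$. Member $B_a$ is built as a staircase whose $i$-th segment is a full line in direction $i$, with its fixed coordinates read from $a$ via a cyclic shift. Concretely, the segment in direction $i$ fixes coordinates $j<i$ to the endpoint values of the previous segment and coordinates $j>i$ to values determined by $a$. The choice is arranged so that consecutive segments share an endpoint, which gives connectivity. I would then verify two properties.
\begin{enumerate}
\item Pairwise intersection: for $a\neq b$ one finds directions $i,k$ such that segment $i$ of $B_a$ and segment $k$ of $B_b$ agree on all coordinates other than $i,k$, and hence cross. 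The argument should compare $a$ and $b$ coordinate by coordinate and use the first index where they differ, in the spirit of ``row $a$ meets column $b$''.
\item Bounded multiplicity: a given vertex $y$ lies on segment $i$ of $B_a$ only if $a$ is essentially determined by $y$ and $i$, up to the choice of which end of a full line is used. This leaves at most $2$ candidates per direction, and hence at most $2d$ members containing $y$.
\end{enumerate}

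The main obstacle is step (i). It requires finding the exact coordinate pattern for the staircase so that two members with arbitrary different parameters always meet, while keeping the pattern rigid enough that (ii) still holds. The two requirements pull against each other: more freedom makes intersection easier but raises multiplicity. My first attempt would be an induction on $d$. In this approach, $B_a$ restricted to the hyperplane $\{y_d=a_{d-1}\}$ is a $(d-1)$-dimensional member, and one extra full line in direction $d$ is attached to it. That line is meant to guarantee intersection with members whose hyperplane differs, mimicking the row/column argument of the $d=2$ case. Members in the same hyperplane would intersect by induction. Members in different hyperplanes would meet through the direction-$d$ line of one member crossing the other's hyperplane part. This might require the direction-$d$ line to be placed at a point that every $(d-1)$-dimensional member contains, which is the delicate point I expect to have to adjust.
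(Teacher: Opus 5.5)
There is a genuine gap, and it is not the ``delicate point'' in step (i) that you flag. For $d\geq 3$ no choice of coordinate pattern can make your plan work. In an intersecting bramble every member is itself a transversal, since any $B_0\in\calB$ meets all members of $\calB$ by definition. Hence the order of $\calB$ is at most $\min_{B\in\calB}|B|$. A staircase has at most $d(q-1)+1$ vertices. Your inductive variant, a $(d-1)$-dimensional member plus one extra line, is again a union of $d$ lines with at most $dq$ vertices. So any bramble made of such members has order $O(dq)$, whereas the lemma requires $q^{d-1}/(2d)$; already for $d=3$ and $q\geq 18$ one has $q^2/6>3q-2$. The same obstruction shows that your properties (i) and (ii) cannot hold together. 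Fix $B_0$. Each of the other $|A|-1$ members contains a vertex of $B_0$, and each vertex of $B_0$ lies in at most $2d-1$ further members. Hence $|A|\leq 1+(2d-1)\bigl(d(q-1)+1\bigr)=O(d^2q)$, not $q^{d-1}$. Your crosses for $d=2$ (and the trivial case $d=1$) are correct, but they succeed only because $q^{d-1}$ is then at most linear in $q$.

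The missing idea is that members must be large, with at least $q^{d-1}/(2d)$ vertices, while keeping radius at most $d(q-1)$. They should therefore be bushy sets around a centre rather than paths. Moreover, the order bound should come from showing directly that every small set is avoided by some member, not from a multiplicity count. The paper does this as follows. Let $P(x,y)$ be the canonical path that corrects the coordinates of $x$ to those of $y$ in the order $1,\dots,d$. Each vertex lies on at most $dq^{d+1}$ of the $q^{2d}$ canonical paths. For $|S|<q^{d-1}/(2d)$, fewer than $q^{2d}/2$ pairs $(x,y)$ have $P(x,y)$ meeting $S$. By averaging over $y$, there is an anchor $y_S$ for which $B_S=\{x : P(x,y_S)\cap S=\emptyset\}$ has more than $q^d/2$ elements. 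For every vertex $z$ of $P(x,y_S)$, the path $P(z,y_S)$ is a subpath of $P(x,y_S)$. Thus $B_S$ is a union of canonical paths into $y_S$, and $G[B_S]$ is connected with radius at most $d(q-1)$. Any two sets $B_S$ intersect because each contains more than half of the vertices. Finally, $B_X\cap X=\emptyset$ shows that no $X$ with $|X|<q^{d-1}/(2d)$ is a transversal.
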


\begin{proof}
    Write $G=G(q,d)$.
    For $x,y\in[q]^d$, let $P(x,y)$ be the shortest path from $x$ to $y$ in $G$ that changes the coordinates of $x$ to the corresponding coordinates of $y$, successively in the order $1,\ldots,d$.
    We call $P(x,y)$ the canonical path from $x$ to $y$. 
    Note that the length of $P(x,y)$ is at most
    $d(q-1)$. 

    Consider the family $\set{P(x,y)\mid x,y\in[q]^d}$ of all canonical paths in $G$.
    We first bound the congestion of this family. 
    We claim that every $z\in[q]^d$ belongs to at most
    \begin{equation}\label{eq:congestion}
        d q^{d+1}
    \end{equation}
    canonical paths.
    To this end, suppose that $z$ lies in $P(x,y)$. 
    By the definition of the canonical path, 
    there exists $i\in[d]$ such that 
    \[
        z_j=x_j\quad\text{for every $j<i$}
        \qquad
        \text{and}
        \qquad
        z_j=y_j\quad\text{for every $j>i$}.
    \]
    Now for a fixed value of $i$, 
    there are at most $q^{i-1}$ choices for $(x_1,\ldots,x_{i-1})$, at most $q^{d-i}$ choices for $(y_{i+1},\ldots,y_d)$, and at most $q^2$ choices for $(x_i,y_i)$. 
    This gives at most $q^{i-1}\cdot q^{d-i}\cdot q^2=q^{d+1}$ choices of $(x,y)$. 
    Hence, the number of canonical paths $P(x,y)$ containing $z$ is upper bounded by $d\cdot q^{d+1}$, which proves~\eqref{eq:congestion}.

    Let $S\subseteq[q]^d$ be any fixed subset of vertices satisfying
    \begin{equation}\label{eq:S_small}
        |S|<\frac{q^{d-1}}{2d}.
    \end{equation}
    By~\eqref{eq:congestion}, the number of pairs $(x,y)$ for which $P(x,y)$ intersects $S$ is at most
    \[
        |S|\,d q^{d+1} < \frac{q^{d-1}}{2d} \cdot d q^{d+1} = \frac{1}{2}q^{2d}.
    \]
    Averaging over the $q^d$ possible choices of $y$, we can choose an anchor $y_S$ such that
    \[
        \big|\{x\in[q]^d\mid V(P(x,y_S))\cap S=\emptyset\}\big|>\frac{1}{2}q^d.
    \]
    Let $B_S=\set{x\in[q]^d\mid V(P(x,y_S))\cap S=\emptyset}$. 
    Note that every vertex of $B_S$ can be joined to $y_S$ inside $G[B_S]$ by a canonical path. Altogether we have (1) $B_S\cap S=\emptyset$, 
    (2) $|B_S|>q^d/2$, (3) $G[B_S]$ is connected, and 
    (4) the radius of $G[B_S]$ is at most $d(q-1)$. 

    Let
    \[
        \calB=\left\{B_S\ \middle|\ S\subseteq[q]^d,\ |S|<\frac{q^{d-1}}{2d}\right\}.
    \]
    Any two members of $\calB$ intersect, since each contains more than half of the vertices of $G$.
    Furthermore, no set $X$ with $|X|<q^{d-1}/(2d)$ is a transversal of $\calB$, because $B_X \in \calB$ and $B_X\cap X=\emptyset$.
    Thus, $\calB$ is an intersecting bramble of order at least $\lceil q^{d-1}/(2d)\rceil$ and radius at most $d(q-1)$.
\end{proof}

\begin{proof}[Proof of \Cref{thm:main}]

    Let $q=\lfloor r/(4d)\rfloor+1$ and $n \geq q$.
    Then $r\geq 4d(q-1)$ and $G(q,d)$ is a subgraph of $G(n,d)$.
    Together this gives
    \[
        \scol_r(G(n,d)) 
        \geq \scol_r(G(q,d))
        \geq 
        \scol_{4d(q-1)}(G(q,d))
        \geq \left\lceil\frac{q^{d-1}}{2d}\right\rceil,
    \]
    where the last inequality follows by \cref{lem:grid_bramble,lem:bramble_to_scol}.
\end{proof}

\section{Upper bound for touching graphs of unit balls}
\label{sec:upper-bound}

Our main result (\cref{thm:main}) states that for positive integers $d$ and $r$ the $r$th strong coloring number of the $d$-dimensional grid satisfies $\scol_r(G(n,d)) \geq \frac2{(4d)^d} \cdot r^{d-1} = \Omega_d(r^{d-1})$ if $n$ is large enough.
In this section we give an asymptotically matching upper bound by considering the larger class $\mathcal{C}_d$ of all touching graphs $G$ of unit balls in $\mathbb{R}^d$ with pairwise disjoint interiors.
Equivalently, each vertex of $G$ is a point in $\mathbb{R}^d$, any two vertices have Euclidean distance at least~$2$, and two vertices form an edge if and only if their distance is exactly~$2$.
In order to give an upper bound on the strong coloring number, we take the order $\sigma$ of vertices of $G$ by their first coordinate and argue that for every vertex $v$ and every integer $r$, the set $\SReach_r[G,\sigma,v]$ is small, since every vertex $u$ in $\SReach_r[G,\sigma,v]$ is close to $v$ in $\mathbb{R}^d$;
in particular all of $\SReach_r[G,\sigma,v]$ lies within a cylinder of height~$4$ and radius $2r+1$.
Dividing the volume of the cylinder by the volume of a unit ball then gives the desired upper bound.

More formally, for $d\geq 1$, let $\omega_d = \frac{2(2\pi)^{(d-1)/2}}{d!!}$ denote the $d$-dimensional volume of the Euclidean unit ball in $\mathbb{R}^d$, and put $\omega_0=1$.
Then the following holds.

\begin{thm}
  Let $d\geq 1$, and let $G$ be the touching graph of a finite family of closed unit balls in $\mathbb{R}^d$ whose interiors are pairwise disjoint.
  Then, for every $r \geq 1$, we have
  \[
    \scol_r(G)
    \leq
    \frac{4\omega_{d-1}}{\omega_d}(2r+1)^{d-1}.
  \]
  In particular, $\scol_r(G)=\mathcal{O}_d(r^{d-1})$.
\end{thm}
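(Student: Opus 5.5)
We order the vertices of $G$ by their first coordinate, $\sigma$, breaking ties arbitrarily. The goal is to show that for every vertex $v$ the set $\SReach_r[G,\sigma,v]$ sits inside a small cylinder around $v$. Then a volume packing argument bounds its size.

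\emph{Step 1: locating the reachable vertices.} Fix $v$ and let $u\in\SReach_r[G,\sigma,v]$, witnessed by a path $v=w_0,w_1,\dots,w_k=u$ with $k\le r$. The internal vertices $w_1,\dots,w_{k-1}$ are greater than $v$ in $\sigma$, so their first coordinate is at least $v_1$. Since $u\le_\sigma v$, we also have $u_1\le v_1$. The path has at most $r$ edges, each of Euclidean length exactly $2$, so $\|u-v\|\le 2r$.

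For the first coordinate we use the last internal vertex $w_{k-1}$ when $k\ge2$. It satisfies $(w_{k-1})_1\ge v_1$ and $\|w_{k-1}-u\|=2$, so $u_1\ge v_1-2$. When $k=1$ the same bound follows directly from $\|u-v\|=2$. Hence $u_1\in[v_1-2,v_1]$. Writing $\pi$ for the projection onto the last $d-1$ coordinates, we get $\|\pi(u)-\pi(v)\|\le 2r$.

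\emph{Step 2: volume packing.} Consider the unit balls centred at the vertices of $\SReach_r[G,\sigma,v]$. Their interiors are pairwise disjoint, and all of them lie in the cylinder
\[ Z=[v_1-3,v_1+1]\times\{p\in\mathbb R^{d-1}:\|p-\pi(v)\|\le 2r+1\}. \]
This cylinder has height $4$ and volume $4\,\omega_{d-1}(2r+1)^{d-1}$. Comparing volumes gives
\[ |\SReach_r[G,\sigma,v]|\cdot\omega_d\le 4\,\omega_{d-1}(2r+1)^{d-1}. \]
Taking the maximum over $v$ yields the claimed bound on $\scol_r(G)$. The case $d=1$ is consistent with this, using the convention $\omega_0=1$.

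\emph{Main obstacle.} The only genuinely delicate point is the lower bound $u_1\ge v_1-2$. It depends on the internal vertices being larger than $v$ in $\sigma$, and it must also handle ties in the first coordinate. Ties cause no trouble, because the argument only needs the non-strict inequality $(w_{k-1})_1\ge v_1$, and this holds whatever tie-breaking rule is used.
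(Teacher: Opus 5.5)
Your proof is correct and follows the paper's argument essentially verbatim. You order by first coordinate, use the last internal vertex of the witnessing path to get $v_1-2\le u_1\le v_1$, apply the triangle inequality for $\lVert u-v\rVert\le 2r$, and pack the unit balls into a cylinder of height $4$ and radius $2r+1$; the only cosmetic difference is that you break ties in the first coordinate arbitrarily, correctly noting that only non-strict inequalities are needed, whereas the paper assumes without loss of generality that all first coordinates are distinct.
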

\begin{proof}
 By taking the center of each unit ball, each vertex $v\in V(G)$ is a point $v = (v_1,\ldots,v_d) \in \mathbb{R}^d$, so that $\lVert u-v \rVert \geq 2$ for every $u \neq v$ and $\lVert u-v \rVert = 2$ if and only if $uv \in E(G)$.
 We may assume without loss of generality that the first coordinates of all points in $V(G)$ are pairwise distinct.
 Let $\sigma$ be the ordering of $V(G)$ by increasing values of their first coordinate in $\mathbb{R}^d$, i.e., $u <_\sigma v$ if and only if $u_1 < v_1$.
  
 Now take a fixed vertex $v\in V(G)$ and consider any vertex $u\in\SReach_r[G,\sigma,v]$.
 Let
 \[
  P=(x^0,x^1,\ldots,x^\ell),
    \qquad x^0=v,\quad x^\ell=u,\quad \ell\leq r,
 \]
 be a path witnessing that $u \in \SReach_r[G,\sigma,v]$.
 Then, in particular $u <_\sigma v$ and $v \leq_\sigma x^{\ell-1}$, which together with $\lVert u-x^\ell \rVert = 2$ gives
 \begin{equation}
    v_1 - 2 \leq u_1 < v_1. \label{eq:1}
 \end{equation}
 Secondly, as $P = (x^0,\ldots,x^\ell)$ is a path from $v$ to $u$ of length $\ell \leq r$, we have 
 \begin{equation}
     \lVert u-v \rVert \leq \sum_{i=0}^{\ell-1} \lVert x^i - x^{i+1} \rVert = 2\ell \leq 2r. \label{eq:2}
 \end{equation}

 \begin{figure}
     \centering
     \includegraphics{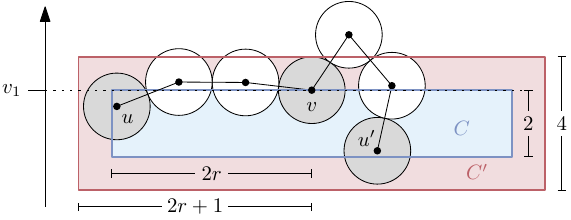}
     \caption{$v,u,u'\in \SReach_{r}(G,\sigma,v)$. The three discs have pairwise disjoint interiors and they are all contained in a cylinder $C'$ of height $4$ and radius $2r+1$.}
     \label{fig:scol-cylinders}
 \end{figure}

 Together, \eqref{eq:1,eq:2} give that the point $u$ lies in a cylinder $C$ of height~$2$ and radius~$2r$ with point $v$ as the center of one of the spherical facets of $C$.
 Consequently, the unit ball with center $u$ lies entirely inside the cylinder $C'$ of height~$4$ and radius~$2r+1$ consisting of all points of distance at most~$1$ to $C$.
 See \cref{fig:scol-cylinders} for an illustrating example.
 Cylinder $C' = B \times I$ is the Cartesian product of a $(d-1)$-dimensional ball $B$ of radius $2r+1$ and a $1$-dimensional interval $I$ of length~$4$.
 Hence, the $d$-dimensional volume of $C'$ is given by
 \[
    \operatorname{vol}_d(C') =4\omega_{d-1}(2r+1)^{d-1}.
 \]
 The unit balls corresponding to the vertices in $\SReach_r[G,\sigma,v]$ have pairwise disjoint interiors, each has volume $\omega_d$, and all of them are completely contained in $C'$.
 Hence, it follows that
 \[
    \bigl|\SReach_r[G,\sigma,v]\bigr|\,\omega_d \leq 4\omega_{d-1}(2r+1)^{d-1},
 \]
 which concludes the proof.
\end{proof}

\section*{Statement of AI use}
The construction in \cref{lem:grid_bramble} was found by OpenAI’s GPT-5.6 Sol. 
The authors take responsibility for the mathematical correctness of the presented arguments.

\bibliographystyle{plain}
\bibliography{biblio}

\begin{thebibliography}{10}

\bibitem{BL91}
Béla Bollobás and Imre Leader.
\newblock Edge-isoperimetric inequalities in the grid.
\newblock {\em Combinatorica}, 11:299--314, 1991.

\bibitem{BousquetEtAl25}
Nicolas Bousquet, Wouter~Cames van Batenburg, Louis Esperet, Gwena{\"e}l Joret,
  and Piotr Micek.
\newblock Shallow brambles.
\newblock {\em Discrete Mathematics \& Theoretical Computer Science}, 27(3),
  2025.
\newblock \href{https://arxiv.org/abs/2502.04177}{arXiv:2502.04177}.

\bibitem{DPUY22}
Zden\v{e}k Dvo\v{r}\'ak, Jakub Pek\'arek, Torsten Ueckerdt, and Yelena
  Yuditsky.
\newblock Weak coloring numbers of intersection graphs.
\newblock In {\em 38th International Symposium on Computational Geometry (SoCG
  2022)}, volume 224 of {\em Leibniz International Proceedings in Informatics
  (LIPIcs)}, pages 39:1--39:15. Schloss Dagstuhl -- Leibniz-Zentrum f{\"u}r
  Informatik, 2022.
\newblock \href{https://arxiv.org/abs/2103.17094}{arXiv:2103.17094}.

\bibitem{KY03}
Hal~A. Kierstead and Daqing Yang.
\newblock Orderings on graphs and game coloring number.
\newblock {\em Order}, 20(3):255--264, 2003.

\bibitem{Kozawa2010}
Kyohei Kozawa, Yota Otachi, and Koichi Yamazaki.
\newblock The carving-width of generalized hypercubes.
\newblock {\em Discrete Mathematics}, 310(21):2867--2876, 2010.

\bibitem{Marx2014}
D{\'a}niel Marx and Anastasios Sidiropoulos.
\newblock The limited blessing of low dimensionality: when 1-1/d is the best
  possible exponent for d-dimensional geometric problems.
\newblock In {\em Proceedings of the Thirtieth Annual Symposium on
  Computational Geometry}, SOCG'14, page 67–76, New York, NY, USA, 2014.
  Association for Computing Machinery.
\newblock \href{https://arxiv.org/abs/1612.01171}{arXiv:1612.01171}.

\bibitem{sparsity}
Jaroslav Ne\v{s}et\v{r}il and Patrice {Ossona de Mendez}.
\newblock {\em Sparsity --- {G}raphs, {S}tructures, and {A}lgorithms},
  volume~28 of {\em Algorithms and {C}ombinatorics}.
\newblock Springer, 2012.

\bibitem{notes}
Marcin Pilipczuk, Micha\l{} Pilipczuk, and Sebastian Siebertz.
\newblock Lecture notes for the course ``{S}parsity'' given at {F}aculty of
  {M}athematics, {I}nformatics, and {M}echanics of the {U}niversity of
  {W}arsaw, winter semesters 2017/18 and 2019/20.
\newblock \url{https://www.mimuw.edu.pl/~mp248287/sparsity2}.

\bibitem{Seymour93}
Paul~D. Seymour and Robin Thomas.
\newblock Graph searching and a min-max theorem for tree-width.
\newblock {\em Journal of Combinatorial Theory, Series B}, 58(1):22--33, 1993.

\bibitem{Zhu09}
Xuding Zhu.
\newblock Colouring graphs with bounded generalized colouring number.
\newblock {\em Discrete Mathematics}, 309(18):5562--5568, 2009.

\end{thebibliography}

\end{document}